\begin{document}

\title{Modified $\l_{p}$-norm regularization minimization for sparse signal recovery}


\author{Angang Cui \and
        Jigen Peng\and
        Haiyang Li
}

\institute{Angang Cui  \at School of Mathematics and Statistics, Xi'an Jiaotong University, Xi'an, 710049, China. \email{cuiangang@163.com}\and Jigen Peng \at School of Mathematics and Statistics, Xi'an Jiaotong University, Xi'an, 710049, China. Corresponding author. \email{jgpengxjtu@126.com}\and Haiyang Li\at School of Science, Xi'an Polytechnic University, Xi'an, 710048, China. \email{fplihaiyang@126.com}}

\date{Received: date / Accepted: date}

\maketitle

\begin{abstract}
In numerous substitution models for the $\l_{0}$-norm minimization problem $(P_{0})$, the $\l_{p}$-norm minimization $(P_{p})$ with $0<p<1$ have been considered as the most natural choice.
However, the non-convex optimization problem $(P_{p})$ are much more computational challenges, and are also NP-hard. Meanwhile, the algorithms corresponding to the proximal mapping of the
regularization $\l_{p}$-norm minimization $(P_{p}^{\lambda})$ are limited to few specific values of parameter $p$. In this paper, we replace the $\ell_{p}$-norm $\|x\|_{p}^{p}$ with a
modified function $\sum_{i=1}^{n}\frac{|x_{i}|}{(|x_{i}|+\epsilon_{i})^{1-p}}$. With change the parameter $\epsilon>0$, this modified function would like to interpolate the
$\l_{p}$-norm $\|x\|_{p}^{p}$. By this transformation, we translated the $\l_{p}$-norm regularization minimization $(P_{p}^{\lambda})$ into a modified $\l_{p}$-norm regularization minimization $(P_{p}^{\lambda,\epsilon})$. Then, we develop the thresholding representation theory of the problem $(P_{p}^{\lambda,\epsilon})$, and based on it, the IT algorithm is proposed to
solve the problem $(P_{p}^{\lambda,\epsilon})$ for all $0<p<1$. Indeed, we could get some much better results by choosing proper $p$, which is one of the advantages for our algorithm
compared with other methods. Numerical results also show that, for some proper $p$, our algorithm performs the best in some sparse signal recovery problems compared with some
state-of-art methods.
\keywords{Compressed sensing\and $\l_{p}$-norm regularization minimization\and Modified $\l_{p}$-norm regularization minimization\and Iterative thresholding algorithm}
\subclass{90C26\and 34K29\and 49M20}
\end{abstract}

\section{Introduction}\label{section1}
Over the last decade, the compressed sensing \cite{candes1,candes2,donoho3} has attracted much attention in many science applications such as signal and image processing \cite{elad4},
medicine \cite{lust5}, astronomy \cite{bobin6}, seismology \cite{herrmann7}, and so on. The fundamental problem of compressed sensing is to recover a high-dimensional sparse
signal from a small number of linear measurements. In mathematics, it can be modeled into the following $\l_{0}$-minimization problem:
\begin{equation}\label{equ1}
(P_{0})\ \ \ \ \ \min_{x\in \mathbb{R}^{n}}\|x\|_{0}\ \ \mathrm{subject}\ \mathrm{to}\ \ Ax=b,
\end{equation}
where $A$ is a $m\times n$ real matrix of full row rank with $m\ll n$, $b$ is a nonzero real column vector of $m$-dimension, and $\|x\|_{0}$ is the so-called $\l_0$-norm of real
vector $x$, which counts the number of the non-zero entries in $x$. Unfortunately, although the $\l_{0}$-norm provides a very simple and essentially grasped notion of sparsity,
the problem $(P_{0})$ is truly a challenging non-convex optimization problem for which all known finite time algorithms have at least doubly exponential running times in both
theory and practice and is known to be NP-hard and is also NP-hard to approximate. The $\l_{1}$-norm minimization problem $(P_{1})$ is the most popular alternative:
\begin{equation}\label{equ2}
(P_{1})\ \ \ \ \ \min_{x\in \mathbb{R}^{n}}\|x\|_{1}\ \ \mathrm{subject}\ \mathrm{to}\ \ Ax=b,
\end{equation}
where $\|x\|_{1}=\sum_{i=1}^{n}|x_{i}|$. It is the tightest convex relaxation of the NP-hard problem $(P_{0})$ and many excellent theoretical and algorithmic works
(see, e.g., \cite{candes1,candes8,chen9,dau10,donoho11,donoho12,donoho13,donoho14,gri15,gold16,yang17,yin18,dau27}) have been proposed to solve the problem $(P_{1})$. However, as
the compact convex relaxation of the problem $(P_{0})$, the problem $(P_{1})$ may be suboptimal for recovering a real sparse signal, and its regularization problem tends to
lead to biased estimation by shrinking all the entries toward to zero simultaneously, and sometimes results in over-penalization.

With recent development of non-convex relaxation approach in sparse signal recovery problems, many researchers have shown that using the $\l_{p}$-norm $(0<p<1)$ to approximate
the $\l_{0}$-norm is a better choice than using the $\l_{1}$-norm (see, e.g.,\cite{cha19,cha20,fou21,lai22,che23,dau24,mou25,sun26,pen28,xu29}). Because of the
following relationship
\begin{equation}\label{equ3}
\|x\|_{0}=\lim_{p\rightarrow 0^{+}}\sum_{i=1}^{n}|x_{i}|^{p}=\lim_{p\rightarrow 0^{+}}\|x\|_{p}^{p}.
\end{equation}
The $\l_{p}$-norm $(0<p<1)$ minimization problem $(P_{p})$ seems to be the most popular choice to find the sparse signal, and the minimization takes the form
\begin{equation}\label{equ4}
(P_{p})\ \ \ \ \ \min_{x\in \mathbb{R}^{n}}\|x\|_{p}^{p}\ \ \mathrm{subject}\ \mathrm{to}\ \ Ax=b.
\end{equation}
It is important to emphasize that, in \cite{pen28}, the authors demonstrated that in every underdetermined linear system $Ax=b$ there corresponds a constant $p^{*}(A,b)>0$, which
is called NP/CMP equivalence constant, such that every solution to the problem $(P_{p})$ also solves the problem $(P_{0})$ whenever $0<p<p^{*}(A,b)$.

Different from the convex optimization problem $(P_{1})$, the non-convex optimization problem $(P_{p})$ is much more computational challenges, and is also NP-hard \cite{ge30}.
In \cite{dau31}, the iteration reweighted least squares minimization algorithm (IRLS algorithm in short) is proposed to solve the problem $(P_{p})$ for all $0<p<1$. The authors
proved that the rate of local convergence of this algorithm was superlinear and that the rate was faster for smaller $p$ and increased towards quadratic as $p\rightarrow 0$, and,
at each iteration, the solution of a least squares problem is required, of which the computational complexity is $\mathcal{O}$$(mn^{2})$.

On the other hand, some optimization methods have been proposed for its regularized model
\begin{equation}\label{equ5}
(P_{p}^{\lambda})\ \ \ \ \ \min_{x\in \mathbb{R}^{n}}\Big\{\|Ax-b\|_{2}^{2}+\lambda \|x\|_{p}^{p}\Big\}
\end{equation}
where $\lambda>0$ is the regularization parameter. Xu et al. \cite{xu32} considered the $\l_{\frac{1}{2}}$ regularization and proposed the iterative half thresholding algorithm
(Half algorithm in short) to solve problem $(P_{p}^{\lambda})$ when $p=\frac{1}{2}$.  The authors showed that $\l_{\frac{1}{2}}$ regularization could be fast solved by Half algorithm,
and the computational complexity is $\mathcal{O}(mn)$. On the basis of the Half algorithm, Cao et al. \cite{cao33} proposed an iterative $\l_{\frac{2}{3}}$ thresholding
algorithm to solve problem $(P_{p}^{\lambda})$ when $p=\frac{2}{3}$.

Although the computational complexity of Half algorithm and iterative $\l_{\frac{2}{3}}$ thresholding algorithm are lower than IRLS algorithm, they are limited to few
specific values of parameter $p$ $(p=\frac{1}{2}, \frac{2}{3})$.

In this paper, a modified $\l_{p}$-norm minimization problem is considered to approximate the problem $(P_{p}^{\lambda})$ for all $0<p<1$. In this new modified model, the
$\l_{p}$-norm $\|x\|_{p}^{p}$ is replaced by
\begin{equation}\label{equ6}
\sum_{i=1}^{n}\frac{|x_{i}|}{(|x_{i}|+\epsilon_{i})^{1-p}}
\end{equation}
where $\epsilon=(\epsilon_{1},\epsilon_{2},\cdots,\epsilon_{n})^{\top}\succ0$. With the change of parameter $\epsilon_{i}>0$,
we have
\begin{equation}\label{equ7}
\lim_{\epsilon_{i}\rightarrow0^{+}}\frac{|x_{i}|}{(|x_{i}|+\epsilon_{i})^{1-p}}\approx|x_{i}|^{p},        
\end{equation}
and the function (\ref{equ7}) interpolates the $\l_{p}$-norm of vector $x$:
\begin{equation}\label{equ8}
\lim_{\epsilon_{i}\rightarrow0^{+}}\sum_{i=1}^{n}\frac{|x_{i}|}{(|x_{i}|+\epsilon_{i})^{1-p}}\approx\|x\|^{p}_{p}.
\end{equation}
By this transformation, the problem $(P_{p}^{\lambda})$ could be approximated by the following mnodified $\l_{p}$-norm minimization problem
\begin{equation}\label{equ9}
(P_{p}^{\lambda,\epsilon})\ \ \ \ \ \min_{x\in \mathbb{R}^{n}}\Big\{\|Ax+b\|_{2}^{2}+\lambda \sum_{i=1}^{n}\frac{|x_{i}|}{(|x_{i}|+\epsilon_{i})^{1-p}}\Big\}.
\end{equation}

The rest of this paper is organized as follows. In Section \ref{section2}, an iterative thresholding (IT) algorithm is proposed to solve the problem $(P_{p}^{\lambda,\epsilon})$. The
convergence of the IT algorithm is established in Section \ref{section3}. In Section \ref{section4}, we present the experiments with a series of sparse signal recovery applications
to demonstrate the effectiveness of our algorithm. Some conclusion remarks are presented in Section \ref{section5}.

\section{The thresholding representation theory and algorithm for solving the problem $(P_{p}^{\lambda,\epsilon})$}\label{section2}
In this section, we firstly establish the thresholding representation theory of the problem $(P_{p}^{\lambda,\epsilon})$, which underlies the algorithms to be proposed.
Then, an iterative thresholding algorithm is proposed to solve the problem $(P_{p}^{\lambda,\epsilon})$ for all $p\in(0,1)$.

\subsection{Thresholding representation theory of $(P_{p}^{\lambda,\epsilon})$}\label{subsection2-1}
In this subsection, we establish the thresholding representation theory of the problem $(P_{p}^{\lambda,\epsilon})$, which underlies the algorithm to be proposed.

Before the analytic expression of the thresholding representation theory of the problem $(P_{p}^{\lambda,\epsilon})$, a crucial result need to be introduced for later use.

\begin{lemma}\label{lem1}{\rm(see \cite{dau10})}
For any $\lambda>0$ and $\alpha, \beta\in \mathbb{R}$, suppose that
\begin{equation}\label{equ10}
S_{\lambda,1}(\beta)\triangleq\arg\min_{\alpha\in \mathbb{R}}\{(\alpha-\beta)^{2}+\lambda|\alpha|\}.
\end{equation}
then the operator $S_{\lambda,1}(\beta)$ can be expressed by
\begin{equation}\label{equ11}
S_{\lambda,1}(\beta)=\mathrm{sign}(\beta)\cdot\max\{|\beta|-\frac{\lambda}{2},0\}.
\end{equation}
\end{lemma}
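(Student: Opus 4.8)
The plan is to exploit the convexity of the scalar objective $f(\alpha)=(\alpha-\beta)^{2}+\lambda|\alpha|$ and to pin down its minimizer through a first-order optimality condition. First I would note that $f$ is the sum of a strictly convex quadratic and the convex function $\lambda|\alpha|$, so $f$ is itself strictly convex and coercive on $\mathbb{R}$. This guarantees that the argmin in (\ref{equ10}) is attained at a unique point; hence $S_{\lambda,1}(\beta)$ is well defined and it suffices to identify that single point and verify it equals the claimed expression.

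Next I would write down the subdifferential optimality condition $0\in\partial f(\alpha^{\ast})=2(\alpha^{\ast}-\beta)+\lambda\,\partial|\alpha^{\ast}|$, recalling that $\partial|\alpha|=\{\mathrm{sign}(\alpha)\}$ for $\alpha\neq0$ and $\partial|0|=[-1,1]$. Treating the two smooth branches separately, on $\alpha>0$ the stationarity equation $2(\alpha-\beta)+\lambda=0$ yields $\alpha=\beta-\lambda/2$, which is admissible precisely when $\beta>\lambda/2$; by the symmetric computation, on $\alpha<0$ it yields $\alpha=\beta+\lambda/2$, admissible when $\beta<-\lambda/2$. The remaining possibility $\alpha^{\ast}=0$ is consistent with the inclusion exactly when $2\beta\in[-\lambda,\lambda]$, that is, when $|\beta|\le\lambda/2$.

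Finally I would assemble these three mutually exclusive regimes into the compact formula $\alpha^{\ast}=\mathrm{sign}(\beta)\cdot\max\{|\beta|-\lambda/2,0\}$: when $|\beta|>\lambda/2$ the sign of $\alpha^{\ast}$ matches that of $\beta$ while its magnitude is $|\beta|-\lambda/2$, and for $|\beta|\le\lambda/2$ the $\max$ collapses to zero, recovering exactly (\ref{equ11}). The main obstacle is the non-smoothness of $|\alpha|$ at the origin: the ordinary derivative test cannot by itself certify that $\alpha^{\ast}=0$ minimizes $f$ on the shrinkage region, so care is needed either to invoke the subdifferential inclusion correctly at $0$ or, equivalently, to compare the infima of $f$ over the two open half-lines against the value $f(0)=\beta^{2}$. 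Once this boundary case is settled, uniqueness from strict convexity closes the argument.
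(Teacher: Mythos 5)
Your proof is correct: the paper itself gives no proof of this lemma, merely citing \cite{dau10}, so there is no in-paper argument to compare against. Your subdifferential derivation of the soft-thresholding operator --- splitting into the two smooth branches plus the inclusion $0\in -2\beta+\lambda[-1,1]$ at the origin, and using strict convexity for uniqueness --- is the standard and complete way to establish (\ref{equ11}).
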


Nextly, we will show that the optimal solution to $(P_{p}^{\lambda,\epsilon})$ could be expressed as a thresholding operation.

For any fixed positive parameters $\lambda>0$, $\mu>0$, $a>0$ and $x, y\in \mathbb{R}^{n}$, let
\begin{equation}\label{equ12}
\mathcal{H}_{1}(x)=\|Ax-b\|_{2}^{2}+\lambda \sum_{i=1}^{n}\frac{|x_{i}|}{(|x_{i}|+\epsilon_{i})^{1-p}},
\end{equation}
and its surrogate function
\begin{equation}\label{equ13}
\begin{array}{llll}
\mathcal{H}_{2}(x,y)&=&\displaystyle\mu\|Ax-b\|_{2}^{2}+\lambda\mu \sum_{i=1}^{n}\frac{|x_{i}|}{(|y_{i}|+\epsilon_{i})^{1-p}}\\
&&-\mu\|Ax-Ay\|_{2}^{2}+\|x-y\|_{2}^{2}.
\end{array}
\end{equation}
It is clearly that $\mathcal{H}_{2}(x,x)=\mu \mathcal{H}_{1}(x)$ for all $\mu>0$.

\begin{theorem}\label{the1}
For any $\lambda>0$ and $0<\mu<\frac{1}{\|A\|_{2}^{2}}$. If $x^{\ast}$ is the optimal solution of $\displaystyle\min_{x\in \mathbb{R}^{n}}\mathcal{H}_{1}(x)$, then $x^{\ast}$ is also the optimal solution of $\displaystyle\min_{x\in \mathbb{R}^{n}}\mathcal{H}_{2}(x,x^{\ast})$, that is
$$\mathcal{H}_{2}(x^{\ast},x^{\ast})\leq \mathcal{H}_{2}(x,x^{\ast})$$
for any $x\in \mathbb{R}^{n}$.
\end{theorem}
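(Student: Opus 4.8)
The plan is to argue by majorization. Since $\mathcal{H}_2(x^\ast,x^\ast)=\mu\mathcal{H}_1(x^\ast)$, proving the claim amounts to showing $\mathcal{H}_2(x,x^\ast)\ge\mathcal{H}_2(x^\ast,x^\ast)$ for every $x\in\mathbb{R}^n$. First I would insert the exact value $\mu\mathcal{H}_1(x)$ and split
\[
\mathcal{H}_2(x,x^\ast)-\mathcal{H}_2(x^\ast,x^\ast)=\mu\big(\mathcal{H}_1(x)-\mathcal{H}_1(x^\ast)\big)+\big(\mathcal{H}_2(x,x^\ast)-\mu\mathcal{H}_1(x)\big).
\]
The first bracket is nonnegative for free, because $x^\ast$ is assumed to be a global minimizer of $\mathcal{H}_1$; this is the only place the optimality of $x^\ast$ is used, and it reduces the whole problem to showing that the surrogate gap $\mathcal{H}_2(x,x^\ast)-\mu\mathcal{H}_1(x)$ is nonnegative.

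Next I would expand that gap. The two copies of $\mu\|Ax-b\|_2^2$ cancel, leaving
\[
\mathcal{H}_2(x,x^\ast)-\mu\mathcal{H}_1(x)=\big(\|x-x^\ast\|_2^2-\mu\|A(x-x^\ast)\|_2^2\big)+\lambda\mu\,R(x),
\]
where
\[
R(x)=\sum_{i=1}^{n}|x_i|\Big(\frac{1}{(|x^\ast_i|+\epsilon_i)^{1-p}}-\frac{1}{(|x_i|+\epsilon_i)^{1-p}}\Big).
\]
The quadratic part is the easy half: since $\|A(x-x^\ast)\|_2^2\le\|A\|_2^2\,\|x-x^\ast\|_2^2$, the hypothesis $0<\mu<1/\|A\|_2^2$ gives $\|x-x^\ast\|_2^2-\mu\|A(x-x^\ast)\|_2^2\ge(1-\mu\|A\|_2^2)\|x-x^\ast\|_2^2\ge0$. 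This is the single step where the bound on $\mu$ enters, and it is routine.

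The hard part will be the reweighted sum $R(x)$, and this is where I expect the main obstacle. Because the surrogate freezes the denominator at $y=x^\ast$, the $i$-th summand is $|x_i|$ times $\frac{1}{(|x^\ast_i|+\epsilon_i)^{1-p}}-\frac{1}{(|x_i|+\epsilon_i)^{1-p}}$, whose sign equals that of $|x_i|-|x^\ast_i|$; equivalently, writing $\phi_i(t)=|t|/(|t|+\epsilon_i)^{1-p}$, the surrogate replaces $\phi_i$ by the secant-through-the-origin line of slope $1/(|x^\ast_i|+\epsilon_i)^{1-p}$, and since $\phi_i$ is concave on $[0,\infty)$ this line lies \emph{below} $\phi_i$ on $[0,|x^\ast_i|]$. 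Hence $R(x)$ is not sign-definite and cannot be controlled coordinatewise. The route I would try is to bound each negative contribution from below by a quadratic in $x_i-x^\ast_i$ (exploiting the concavity and smoothness of $\phi_i$ away from the kink) and to absorb it into the strictly positive margin $(1-\mu\|A\|_2^2)\|x-x^\ast\|_2^2$ together with the slack in $\mathcal{H}_1(x)-\mathcal{H}_1(x^\ast)$, invoking the first-order optimality of $\mathcal{H}_1$ at $x^\ast$ to pin down $A^\top(Ax^\ast-b)$. If this domination cannot be made to hold uniformly in $x$, it would signal that the reweighting has destroyed the majorization property, so the statement would have to be read for the unchanged (weighted-$\ell_1$) surrogate or restricted to a neighbourhood of $x^\ast$; that is the eventuality I would scrutinise most carefully before committing to the global claim.
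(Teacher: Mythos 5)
Your proposal does not prove the statement, but the step at which it stalls is exactly the step that the paper's own proof passes over without justification, and your diagnosis of the obstruction is correct. The paper argues
\[
\mathcal{H}_{2}(x,x^{\ast})\;\geq\;\mu\|Ax-b\|_{2}^{2}+\lambda\mu\sum_{i=1}^{n}\frac{|x_{i}|}{(|x^{\ast}_{i}|+\epsilon_{i})^{1-p}}\;\geq\;\mu\,\mathcal{H}_{1}(x^{\ast}),
\]
where the first inequality is your quadratic step (it uses $\|x-x^{\ast}\|_{2}^{2}-\mu\|A(x-x^{\ast})\|_{2}^{2}\geq 0$, and in fact only needs $\mu\leq 1/\|A\|_{2}^{2}$), and the second inequality is simply asserted. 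In your decomposition that second inequality reads $\mu\bigl(\mathcal{H}_{1}(x)-\mathcal{H}_{1}(x^{\ast})\bigr)+\lambda\mu R(x)\geq 0$; the optimality of $x^{\ast}$ for $\mathcal{H}_{1}$ controls only the first term, and, as you observe, $R(x)$ is strictly negative on every coordinate with $|x_{i}|<|x_{i}^{\ast}|$. What the paper actually needs is that $x^{\ast}$ minimizes the reweighted objective $\|Ax-b\|_{2}^{2}+\lambda\sum_{i}|x_{i}|(|x_{i}^{\ast}|+\epsilon_{i})^{p-1}$, which is a different function from $\mathcal{H}_{1}$ and is not minimized at $x^{\ast}$ merely because $\mathcal{H}_{1}$ is. So the gap you flag is not an artifact of your route; it is the gap.

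Your fallback plan of absorbing the negative part of $R(x)$ into the margin $(1-\mu\|A\|_{2}^{2})\|x-x^{\ast}\|_{2}^{2}$ cannot succeed, and it is worth seeing why concretely. After completing the square, $x\mapsto\mathcal{H}_{2}(x,x^{\ast})$ is strongly convex and separable, so its minimizer is unique and characterized by first-order conditions. At a coordinate with $x_{i}^{\ast}\neq 0$, stationarity of $\mathcal{H}_{1}$ at $x^{\ast}$ forces $[A^{\top}(Ax^{\ast}-b)]_{i}=-\tfrac{\lambda}{2}\,\mathrm{sign}(x_{i}^{\ast})\,(p|x_{i}^{\ast}|+\epsilon_{i})(|x_{i}^{\ast}|+\epsilon_{i})^{p-2}$, whereas stationarity of $\mathcal{H}_{2}(\cdot,x^{\ast})$ at $x^{\ast}$ forces $[A^{\top}(Ax^{\ast}-b)]_{i}=-\tfrac{\lambda}{2}\,\mathrm{sign}(x_{i}^{\ast})\,(|x_{i}^{\ast}|+\epsilon_{i})^{p-1}$. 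These coincide only if $(1-p)|x_{i}^{\ast}|=0$. Hence whenever the minimizer of $\mathcal{H}_{1}$ has a nonzero entry --- the only case of interest for sparse recovery --- it is not the minimizer of $\mathcal{H}_{2}(\cdot,x^{\ast})$, and the claimed inequality $\mathcal{H}_{2}(x^{\ast},x^{\ast})\leq\mathcal{H}_{2}(x,x^{\ast})$ fails for some $x$. The root cause is the one you identify: freezing the denominator at $x^{\ast}$ replaces the concave penalty $\phi_{i}$ by its chord through the origin, which lies \emph{below} $\phi_{i}$ on $[0,|x_{i}^{\ast}|]$, so $\mathcal{H}_{2}(\cdot,x^{\ast})$ is not a majorizer of $\mu\mathcal{H}_{1}$ and no choice of $\mu$ repairs this. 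Your write-up would be stronger if you committed to this as a refutation of the global claim rather than leaving it as an eventuality to scrutinise; as it stands the proposal is an accurate critique but not a proof.
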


\begin{proof}
By the definition of $\mathcal{H}_{2}(x, y)$, we have
\begin{eqnarray*}
\mathcal{H}_{2}(x,x^{\ast})&=&\mu\|Ax-b\|_{2}^{2}+\lambda\mu \sum_{i=1}^{n}\frac{|x_{i}|}{(|x^{\ast}_{i}|+\epsilon_{i})^{1-p}}\\
&&-\mu\|Ax-Ax^{\ast}\|_{2}^{2}+\|x-x^{\ast}\|_{2}^{2}\\
&\geq&\mu\|Ax-b\|_{2}^{2}+\lambda\mu \sum_{i=1}^{n}\frac{|x_{i}|}{(|x^{\ast}_{i}|+\epsilon_{i})^{1-p}}\\
&\geq&\mu \mathcal{H}_{1}(x^{\ast})\\
&=&\mathcal{H}_{2}(x^{\ast},x^{\ast}).
\end{eqnarray*}
This completes the proof.
\end{proof}

Theorem \ref{the1} implies that if $x^{\ast}$ is the optimal solution of $\displaystyle\min_{x\in \mathbb{R}^{n}}\mathcal{H}_{1}(x)$, then $x^{\ast}$ is also the optimal solution of $\displaystyle\min_{x\in \mathbb{R}^{n}}\mathcal{H}_{2}(x,x^{\ast})$.

\begin{theorem}\label{the2}
For any $\lambda>0$, $\mu>0$ and optimal solution $x^{\ast}$ of $\displaystyle\min_{x\in \mathbb{R}^{n}}\mathcal{H}_{1}(x)$, $\displaystyle\min_{x\in \mathbb{R}^{n}}\mathcal{H}_{2}(x,x^{\ast})$ is equivalent to
\begin{equation}\label{equ14}
\min_{x\in \mathbb{R}^{n}}\Big\{\|x-B_{\mu}(x^{\ast})\|_{2}^{2}+\lambda\mu \sum_{i=1}^{n}\frac{|x_{i}|}{(|x^{\ast}_{i}|+\epsilon_{i})^{1-p}}\Big\}
\end{equation}
where $B_{\mu}(x^{\ast})=x^{\ast}+\mu A^{\top}(b-Ax^{\ast})$.
\end{theorem}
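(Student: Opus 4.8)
The plan is to prove a pointwise algebraic identity: for each fixed $x^{\ast}$, the map $x\mapsto\mathcal{H}_{2}(x,x^{\ast})$ coincides with the objective in (\ref{equ14}) up to an additive constant that is independent of $x$. Since adding an $x$-free constant does not move the argmin, this identity instantly gives the claimed equivalence of the two minimization problems. The first observation I would make is that the penalty term $\lambda\mu\sum_{i=1}^{n}|x_{i}|/(|x_{i}^{\ast}|+\epsilon_{i})^{1-p}$ already appears verbatim in both expressions (here only the \emph{fixed} value $x^{\ast}$ enters the denominators), so it can be carried along untouched and attention restricted to the three quadratic terms $\mu\|Ax-b\|_{2}^{2}$, $-\mu\|Ax-Ax^{\ast}\|_{2}^{2}$, and $\|x-x^{\ast}\|_{2}^{2}$ defining $\mathcal{H}_{2}$ in (\ref{equ13}).

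Next I would expand each of these three quadratics as inner products and sort the outcome by degree in $x$. The decisive step is that the two copies of $\mu\|Ax\|_{2}^{2}$, one coming from $\mu\|Ax-b\|_{2}^{2}$ and one from $-\mu\|Ax-Ax^{\ast}\|_{2}^{2}$, cancel exactly. This cancellation is precisely why the surrogate was constructed with the correction $-\mu\|Ax-Ay\|_{2}^{2}+\|x-y\|_{2}^{2}$: it eliminates the singular operator $A^{\top}A$ from the quadratic-in-$x$ part and leaves $\|x\|_{2}^{2}$ as the only genuinely second-order contribution, which is exactly what will later make the problem separable into scalar thresholding. After this cancellation the $x$-dependent part is $\|x\|_{2}^{2}$ plus a linear form in $x$.

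I would then assemble the linear terms. Using $\langle Ax,b\rangle=\langle x,A^{\top}b\rangle$ and $\langle Ax,Ax^{\ast}\rangle=\langle x,A^{\top}Ax^{\ast}\rangle$, the linear part collapses to $-2\langle x,\,x^{\ast}+\mu A^{\top}(b-Ax^{\ast})\rangle=-2\langle x,B_{\mu}(x^{\ast})\rangle$, which is where the definition of $B_{\mu}(x^{\ast})$ is read off. Completing the square rewrites $\|x\|_{2}^{2}-2\langle x,B_{\mu}(x^{\ast})\rangle$ as $\|x-B_{\mu}(x^{\ast})\|_{2}^{2}-\|B_{\mu}(x^{\ast})\|_{2}^{2}$, so that the whole quadratic block equals $\|x-B_{\mu}(x^{\ast})\|_{2}^{2}$ plus the constant $\mu\|b\|_{2}^{2}-\mu\|Ax^{\ast}\|_{2}^{2}+\|x^{\ast}\|_{2}^{2}-\|B_{\mu}(x^{\ast})\|_{2}^{2}$. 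Reinstating the untouched penalty term yields $\mathcal{H}_{2}(x,x^{\ast})$ equal to the objective of (\ref{equ14}) plus this $x$-free constant, finishing the argument.

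The statement is an identity rather than an inequality, so there is no genuine analytic obstacle; the only care required is the bookkeeping of the several constant terms and confirming that every term involving $A^{\top}A$ lands inside the $x$-independent constant and not in the part being minimized. It is worth noting that optimality of $x^{\ast}$ for $\mathcal{H}_{1}$, though stated in the hypothesis, is not actually used in this equivalence, which holds for any fixed $x^{\ast}\in\mathbb{R}^{n}$; optimality becomes relevant only when this surrogate step is chained together with Theorem \ref{the1} in the iteration.
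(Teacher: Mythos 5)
Your proposal is correct and follows essentially the same route as the paper: both expand the quadratic part of $\mathcal{H}_{2}(x,x^{\ast})$, complete the square to obtain $\|x-B_{\mu}(x^{\ast})\|_{2}^{2}$ plus the $x$-independent constant $\mu\|b\|_{2}^{2}-\|B_{\mu}(x^{\ast})\|_{2}^{2}+\|x^{\ast}\|_{2}^{2}-\mu\|Ax^{\ast}\|_{2}^{2}$, and conclude that the two minimizations coincide. Your closing remark that optimality of $x^{\ast}$ is never used is accurate and consistent with the paper's argument, which likewise treats $x^{\ast}$ as a fixed point in the expansion.
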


\begin{proof}
By the definition, $\mathcal{H}_{2}(x,y)$ can be rewritten as
\begin{eqnarray*}
\mathcal{H}_{2}(x,x^{\ast})&=&\|x-(x^{\ast}-\mu A^{\top}Ax^{\ast}+\mu A^{\top}b)\|_{2}^{2}+\lambda\mu \sum_{i=1}^{n}\frac{|x_{i}|}{(|x^{\ast}_{i}|+\epsilon_{i})^{1-p}}\\
&&+\mu\|b\|_{2}^{2}-\|x^{\ast}-\mu A^{\top}Ax^{\ast}+\mu A^{\top}b\|_{2}^{2}+\|x^{\ast}\|_{2}^{2}-\mu\|Ax^{\ast}\|_{2}^{2}\\
&=&\|x-B_{\mu}(x^{\ast})\|_{2}^{2}+\lambda\mu\sum_{i=1}^{n}\frac{|x_{i}|}{(|x^{\ast}_{i}|+\epsilon_{i})^{1-p}}+\mu\|b\|_{2}^{2}-\|B_{\mu}(x^{\ast})\|_{2}^{2}\\
&&+\|x^{\ast}\|_{2}^{2}-\mu\|Ax^{\ast}\|_{2}^{2}
\end{eqnarray*}
which implies that $\displaystyle\min_{x\in \mathbb{R}^{n}}\mathcal{H}_{2}(x,x^{\ast})$ for any $\lambda>0$, $\mu>0$ is equivalent to
$$\min_{x\in \mathbb{R}^{n}}\Big\{\|x-B_{\mu}(x^{\ast})\|_{2}^{2}+\lambda\mu \sum_{i=1}^{n}\frac{|x_{i}|}{(|x^{\ast}_{i}|+\epsilon_{i})^{1-p}}\Big\}. $$
This completes the proof.
\end{proof}

Combining Theorem \ref{the1} and  Theorem \ref{the2}, we have the following corollary.
\begin{corollary}\label{cor1}
Let $x^{\ast}$ be the optimal solution of $(P_{p}^{\lambda,\epsilon})$. Then $x^{\ast}$ is also the optimal solution of the following minimization problem
\begin{equation}\label{equ15}
\min_{x\in \mathbb{R}^{n}}\Big\{\|x-B_{\mu}(x^{\ast})\|_{2}^{2}+\lambda\mu \sum_{i=1}^{n}\frac{|x_{i}|}{(|x^{\ast}_{i}|+\epsilon_{i})^{1-p}}\Big\}.
\end{equation}
\end{corollary}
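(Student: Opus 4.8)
The plan is to derive this corollary as an immediate consequence of Theorem \ref{the1} and Theorem \ref{the2}, which together already carry out all of the analytic work; the corollary is essentially a packaging step. First I would observe that the problem $(P_{p}^{\lambda,\epsilon})$ is, by definition, exactly $\min_{x\in\mathbb{R}^{n}}\mathcal{H}_{1}(x)$ (the sign discrepancy between $\|Ax+b\|_{2}^{2}$ in (\ref{equ9}) and $\|Ax-b\|_{2}^{2}$ in (\ref{equ12}) is a harmless typographical slip, as replacing $b$ by $-b$ throughout, consistently with the definition of $B_{\mu}(x^{\ast})=x^{\ast}+\mu A^{\top}(b-Ax^{\ast})$, affects no argument). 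Hence the hypothesis ``$x^{\ast}$ is the optimal solution of $(P_{p}^{\lambda,\epsilon})$'' is precisely ``$x^{\ast}$ is the optimal solution of $\min_{x}\mathcal{H}_{1}(x)$,'' which is exactly the shared hypothesis of both preceding theorems.

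The execution then proceeds in two steps. Fixing any $\mu$ with $0<\mu<\frac{1}{\|A\|_{2}^{2}}$, Theorem \ref{the1} applies and yields that $x^{\ast}$ also minimizes $\mathcal{H}_{2}(\cdot,x^{\ast})$, i.e. $x^{\ast}\in\arg\min_{x}\mathcal{H}_{2}(x,x^{\ast})$. Next, Theorem \ref{the2} (whose hypothesis requires only $\mu>0$, hence is met by the same $\mu$) asserts that minimizing $\mathcal{H}_{2}(\cdot,x^{\ast})$ is equivalent to minimizing the functional displayed in (\ref{equ14}), which is verbatim the functional in (\ref{equ15}). The equivalence here means the two problems share the same set of minimizers, the difference between their objectives being a constant independent of $x$ (the terms $\mu\|b\|_{2}^{2}-\|B_{\mu}(x^{\ast})\|_{2}^{2}+\|x^{\ast}\|_{2}^{2}-\mu\|Ax^{\ast}\|_{2}^{2}$ isolated in the proof of Theorem \ref{the2}). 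It therefore follows that $x^{\ast}$ is a minimizer of (\ref{equ15}), which chains the two theorems and closes the argument.

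There is essentially no deep obstacle; the substance is entirely contained in Theorems \ref{the1} and \ref{the2}. The one point deserving a moment's care is the compatibility of the parameter ranges: Theorem \ref{the1} is valid only for $0<\mu<\frac{1}{\|A\|_{2}^{2}}$ (this is exactly what guarantees the step $-\mu\|Ax-Ax^{\ast}\|_{2}^{2}+\|x-x^{\ast}\|_{2}^{2}\geq 0$ used in its proof), whereas Theorem \ref{the2} tolerates any $\mu>0$. To invoke both simultaneously one must take $\mu$ in the smaller interval $(0,\frac{1}{\|A\|_{2}^{2}})$; I would state this constraint explicitly in the corollary, since as currently written it is inherited silently from Theorem \ref{the1}.
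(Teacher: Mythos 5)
Your proposal is correct and follows exactly the route the paper intends: the paper justifies the corollary with the single line ``Combining Theorem \ref{the1} and Theorem \ref{the2}'', and you simply spell out that chaining, including the worthwhile observation that $\mu$ must lie in $(0,\tfrac{1}{\|A\|_{2}^{2}})$ for Theorem \ref{the1} to apply. No substantive difference from the paper's argument.
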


In the following, we derive the most important conclusion in this paper, which underlies the algorithms to be proposed.
\begin{theorem}\label{the3}
Let $x^{\ast}\in \mathbb{R}^{n}$ be the optimal solution of the problem $(P_{p}^{\lambda,\epsilon})$. Then it can be given by
\begin{equation}\label{equ16}
\begin{array}{llll}
x^{\ast}_{i}&=&S_{\frac{\lambda\mu}{(|x_{i}^{\ast}|+\epsilon_{i})^{1-p}},1}([B_{\mu}(x^{\ast})]_{i})\\
&=&\displaystyle\mathrm{sign}([B_{\mu}(x^{\ast})]_{i})\cdot\max\Big\{|[B_{\mu}(x^{\ast})]_{i}|-\displaystyle\frac{\lambda\mu}{2(|x_{i}^{\ast}|+\epsilon_{i})^{1-p}}, 0\Big\}
\end{array}
\end{equation}
for $i=1,2,\cdots,n$, where $[B_{\mu}(x^{\ast})]_{i}$ represents the $i$-th component of vector $B_{\mu}(x^{\ast})$.
\end{theorem}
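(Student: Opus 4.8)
The plan is to leverage Corollary \ref{cor1} to reduce Theorem \ref{the3} to a separable, coordinatewise problem and then apply the soft-thresholding formula of Lemma \ref{lem1} in each coordinate. By Corollary \ref{cor1}, the optimal solution $x^{\ast}$ of $(P_{p}^{\lambda,\epsilon})$ is simultaneously a minimizer of
$$\min_{x\in \mathbb{R}^{n}}\Big\{\|x-B_{\mu}(x^{\ast})\|_{2}^{2}+\lambda\mu \sum_{i=1}^{n}\frac{|x_{i}|}{(|x^{\ast}_{i}|+\epsilon_{i})^{1-p}}\Big\}.$$
The key structural observation I would exploit is that, once $x^{\ast}$ is fixed, each weight $\lambda\mu/(|x^{\ast}_{i}|+\epsilon_{i})^{1-p}$ is a fixed positive scalar, so the objective above splits as a sum of $n$ independent one-dimensional functions of the individual coordinates $x_{i}$. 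Consequently the global minimization over $x\in\mathbb{R}^{n}$ is achieved coordinatewise.

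First I would write the $i$-th term of the decoupled objective as
$$\big(x_{i}-[B_{\mu}(x^{\ast})]_{i}\big)^{2}+\frac{\lambda\mu}{(|x^{\ast}_{i}|+\epsilon_{i})^{1-p}}\,|x_{i}|,$$
which is exactly of the form $(\alpha-\beta)^{2}+\widetilde{\lambda}|\alpha|$ appearing in Lemma \ref{lem1}, with the identifications $\alpha=x_{i}$, $\beta=[B_{\mu}(x^{\ast})]_{i}$, and regularization level $\widetilde{\lambda}=\lambda\mu/(|x^{\ast}_{i}|+\epsilon_{i})^{1-p}$. Since $x^{\ast}$ minimizes the separable problem, its $i$-th coordinate $x^{\ast}_{i}$ must minimize this $i$-th term. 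Applying Lemma \ref{lem1} with these substitutions then yields
$$x^{\ast}_{i}=S_{\frac{\lambda\mu}{(|x^{\ast}_{i}|+\epsilon_{i})^{1-p}},1}\big([B_{\mu}(x^{\ast})]_{i}\big)=\mathrm{sign}\big([B_{\mu}(x^{\ast})]_{i}\big)\cdot\max\Big\{\big|[B_{\mu}(x^{\ast})]_{i}\big|-\frac{\lambda\mu}{2(|x^{\ast}_{i}|+\epsilon_{i})^{1-p}},0\Big\}$$
for each $i=1,2,\dots,n$, which is precisely the claimed representation \eqref{equ16}.

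The step that deserves the most care, and which I would expect to be the main conceptual obstacle, is the self-referential nature of the resulting identity: the threshold level on the right-hand side depends on $x^{\ast}_{i}$, the very quantity being characterized on the left. I would emphasize that this is not a circularity but rather a fixed-point optimality characterization. The legitimacy of treating the weights as frozen constants comes entirely from Corollary \ref{cor1}, which holds the optimal $x^{\ast}$ fixed in the penalty; the theorem therefore delivers a necessary condition satisfied by $x^{\ast}$, not a closed-form solution, and it is exactly this implicit thresholding identity that motivates the iterative scheme in which the weights are updated from the previous iterate. A secondary point to verify cleanly is the separability claim itself, namely that both the quadratic term $\|x-B_{\mu}(x^{\ast})\|_{2}^{2}$ and the penalty sum decompose additively over coordinates, so that coordinatewise minimization is equivalent to joint minimization; this is immediate but worth stating so that the appeal to the scalar Lemma \ref{lem1} is fully justified.
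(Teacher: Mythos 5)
Your proposal is correct and follows exactly the paper's own route: invoke Corollary \ref{cor1} to pass to the surrogate problem, observe that the objective separates coordinatewise with the weights $\lambda\mu/(|x^{\ast}_{i}|+\epsilon_{i})^{1-p}$ frozen, and apply Lemma \ref{lem1} in each coordinate. Your additional remarks on the fixed-point (implicit) character of the identity are accurate but supplementary; the core argument matches the paper's proof.
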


\begin{proof}
It is to see clear that the problem (\ref{equ15}) can be rewritten as
\begin{equation}\label{equ17}
\min_{x\in \mathbb{R}^{n}}\sum_{i=1}^{n}\Big\{(x_{i}-[B_{\mu}(x^{\ast})]_{i})^{2}+\lambda\mu \frac{|x_{i}|}{(|x^{\ast}_{i}|+\epsilon_{i})^{1-p}}\Big\}.
\end{equation}
Then, solving the problem (\ref{equ15}) reduces to
\begin{equation}\label{equ18}
\min_{x_{i}\in \mathbb{R}}\Big\{(x_{i}-[B_{\mu}(x^{\ast})]_{i})^{2}+\lambda\mu \frac{|x_{i}|}{(|x^{\ast}_{i}|+\epsilon_{i})^{1-p}}\Big\}.
\end{equation}
Together with Lemma \ref{lem1}, we immediately finish the proof.
\end{proof}

\subsection{Iterative thresholding algorithm for solving $(P_{p}^{\lambda,\epsilon})$}\label{subsection2-2}
With the thresholding representation (\ref{equ18}), the iterative thresholding (IT) algorithm for solving ($P_{p}^{\lambda,\epsilon}$) for all $p\in(0,1)$
can be naturally defined as
\begin{equation}\label{equ19}
\begin{array}{llll}
x^{k+1}_{i}&=&S_{\frac{\lambda\mu}{(|x_{i}^{k}|+\epsilon_{i})^{1-p}},1}([B_{\mu}(x^{k})]_{i})\\
&=&\displaystyle\mathrm{sign}([B_{\mu}(x^{k})]_{i})\cdot\max\Big\{|[B_{\mu}(x^{k})]_{i}|-\displaystyle\frac{\lambda\mu}{2(|x_{i}^{k}|+\epsilon_{i})^{1-p}}, 0\Big\}
\end{array}
\end{equation}
for $k=0,1,2,\cdots$, where $S_{\frac{\lambda\mu}{(|x_{i}^{\ast}|+\epsilon_{i})^{1-p}},1}([B_{\mu}(x^{k})]_{i})$ is obtained by replacing $\lambda$ with $\frac{\lambda\mu}{(|x_{i}^{\ast}|+\epsilon_{i})^{1-p}}$ in $S_{\lambda,1}([B_{\mu}(x^{k})]_{i})$.

In general, the quantity of the solution of a regularization problem depends seriously on the setting of the regularization parameter $\lambda>0$. However, the selection of proper
regularization parameter is a very hard problem. In IT algorithm, the cross-validation method (see \cite{xu32}) is accepted to choose the proper regularization parameter $\lambda>0$. Thus, the
IT algorithm will be adaptive and intelligent on the choice of regularization parameter $\lambda$. To make this selection clear, we suppose that the vector $x^{\ast}$ of sparsity $r$ is the
optimal solution to the problem $(P_{p}^{\lambda,\varepsilon})$. Without loss of generality, we assume that
$$|[B_{\mu}(x^{\ast})]_{1}|\geq|[B_{\mu}(x^{\ast})]_{2}|\geq\cdots\geq|[B_{\mu}(x^{\ast})]_{n}|.$$
By (\ref{equ16}), the following inequalities hold
$$|[B_{\mu}(x^{\ast})]_{i}|>\frac{\lambda\mu}{2(|x_{i}^{\ast}|+\epsilon_{i})^{1-p}}\ \Leftrightarrow \ i\in\{1,2,\cdots,r\},$$
$$|[B_{\mu}(x^{\ast})]_{i}|\leq\frac{\lambda\mu}{2(|x_{i}^{\ast}|+\epsilon_{i})^{1-p}}\ \Leftrightarrow \ i\in\{r+1,r+2,\cdots,m\},$$
which implies
$$\max_{i\in\{r+1,r+2,\cdots,n\}}\frac{2|[B_{\mu}(x^{\ast})]_{i}|(|x_{i}^{\ast}|+\epsilon_{i})^{1-p}}{\mu}\leq\lambda$$
and
$$\lambda<\min_{i\in\{1,2,\cdots,r\}}\frac{2|[B_{\mu}(x^{\ast})]_{i}|(|x_{i}^{\ast}|+\epsilon_{i})^{1-p}}{\mu}.$$
For the sake of simplicity, we set
\begin{equation}\label{equ20}
\lambda\in\bigg[\displaystyle\frac{2|[B_{\mu}(x^{\ast})]_{r+1}|(\lceil x^{\ast}\rfloor_{r+1}+\lceil\epsilon\rfloor_{r+1})^{1-p}}{\mu}, \frac{2|[B_{\mu}(x^{\ast})]_{r}|(\lceil x^{\ast}\rfloor_{r}+\lceil\epsilon\rfloor_{r})^{1-p}}{\mu}\bigg)
\end{equation}
where $\lceil x\rfloor$ is the nonincreasing rearrangement of the vector $|x|$ for which
$$\lceil x\rfloor_{1}\geq \lceil x\rfloor_{2}\geq\cdots\geq \lceil x\rfloor_{n}\geq0$$
and there is a permutation $\pi:[n]\rightarrow[n]$ with $\lceil x\rfloor_{i}=|x_{\pi(i)}|$ for all $i\in[n]$.

In practice, we approximate $B_{\mu}(x^{\ast})$ (resp., $\tilde{x}^{\ast}$) by $B_{\mu}(x^{k})$ (resp., $\tilde{x}^{k}$) in (\ref{equ20}), and a choice of $\lambda$ is
\begin{equation}\label{equ21}
\begin{array}{llll}
\lambda_{k}\in\bigg[\displaystyle\frac{2|[B_{\mu}(x^{k})]_{r+1}|(\lceil x^{k}\rfloor_{r+1}+\lceil\epsilon\rfloor_{r+1})^{1-p}}{\mu},\frac{2|[B_{\mu}(x^{k})]_{r}|(\lceil x^{k}\rfloor_{r}+\lceil\epsilon\rfloor_{r})^{1-p}}{\mu}\bigg).
\end{array}
\end{equation}
We can then take
$$\lambda_{k}=\frac{2}{\mu}\Big[(1-\alpha)|[B_{\mu}(x^{k})]_{r+1}|(\lceil x^{k}\rfloor_{r+1}+\lceil\epsilon\rfloor_{r+1})^{1-p}+\alpha|[B_{\mu}(x^{k})]_{r}|(\lceil x^{k}\rfloor_{r}+\lceil\epsilon\rfloor_{r})^{1-p}\Big].$$
with any $\alpha\in(0,1)$. Taking $\alpha=0$, this leads to a most reliable choice of $\lambda$ specified by
\begin{equation}\label{equ22}
\lambda=\lambda_{k}=\frac{2|[B_{\mu}(x^{k})]_{r+1}|(\lceil x^{k}\rfloor_{r+1}+\lceil\epsilon\rfloor_{r+1})^{1-p}}{\mu}
\end{equation}
in each iteration.

\begin{algorithm}[h!]
\caption{: IT Algorithm}
\label{alg:B}
\begin{algorithmic}
\STATE {\textbf{Initialize}: Choose $x^{0}\in \mathbb{R}^{n}$, $\epsilon\in \mathbb{R}^{n}_{+}$, $\mu=\frac{1-\eta}{\|A\|_{2}^{2}}$ $(\eta\in(0,1))$ and $p\in(0,1)$;}
\STATE \ \ {$k=0$;}
\STATE \ \ {\textbf{while} not converged \textbf{do}}
\STATE \ \ \ \ \ \ \ \ {$B_{\mu}(x^{k})=x^{k}+\mu A^{T}(y-Ax^{k})$;}
\STATE \ \ \ \ \ \ \ \ {$\lambda=\lambda_{k}=\frac{2|[B_{\mu}(x^{k})]_{r+1}|(\lceil x^{k}\rfloor_{r+1}+\lceil\epsilon\rfloor_{r+1})^{1-p}}{\mu}$;}
\STATE \ \ \ \ \ \ \ \ {for\ $i=1:n$}
\STATE \ \ \ \ \ \ \ \ \ \ \ {$x^{k+1}_{i}=\mathrm{sign}([B_{\mu}(x^{k})]_{i})\cdot\max\Big\{|[B_{\mu}(x^{k})]_{i}|-\frac{\lambda\mu}{2(|x_{i}^{k}|+\epsilon_{i})^{1-p}},  0\Big\}$}
\STATE\ \ {$k\rightarrow k+1$}
\STATE\ \ {\textbf{end while}}
\STATE{\textbf{return}: $x^{k+1}$}
\end{algorithmic}
\end{algorithm}

\section{Convergence analysis of the IT algorithm}\label{section3}

In this section, the convergence of the IT algorithm is established under some specific conditions. It is necessary to emphasize that the ideas for the prove
of the convergence of the IT algorithm are mainly inspired by the former work of Xu et al. \cite{xu32}.

\begin{theorem}\label{the4}
Let $\{x^{k}\}$ be the sequence generated by iteration (\ref{equ19}) with the step size $\mu$ satisfying $0<\mu<\frac{1}{\|A\|_{2}^{2}}$.
Then the sequence $\mathcal{H}_{1}(x^{k})$ is decreasing and converging to $\mathcal{H}_{1}(x^{\ast})$, where $x^{\ast}$ is a limit point of minimization sequence $\{x^{k}\}$.
\end{theorem}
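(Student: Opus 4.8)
The plan is to follow the majorization--minimization (surrogate) strategy already set up by Theorems \ref{the1}--\ref{the3}, in the spirit of Xu et al. \cite{xu32}. The starting observation is that, by Theorem \ref{the2} and Lemma \ref{lem1}, the update (\ref{equ19}) is exactly $x^{k+1}=\arg\min_{x}\mathcal{H}_{2}(x,x^{k})$, so in particular $\mathcal{H}_{2}(x^{k+1},x^{k})\le \mathcal{H}_{2}(x^{k},x^{k})$. Combining this with the identity $\mathcal{H}_{2}(x^{k},x^{k})=\mu\,\mathcal{H}_{1}(x^{k})$ noted right after (\ref{equ13}) reduces the monotonicity claim to controlling $\mathcal{H}_{2}(x^{k+1},x^{k})$ from below by $\mu\,\mathcal{H}_{1}(x^{k+1})$.

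First I would expand the difference $\mathcal{H}_{2}(x^{k+1},x^{k})-\mu\,\mathcal{H}_{1}(x^{k+1})$ directly from (\ref{equ13}). The quadratic part contributes $\|x^{k+1}-x^{k}\|_{2}^{2}-\mu\|Ax^{k+1}-Ax^{k}\|_{2}^{2}$, which is bounded below by $(1-\mu\|A\|_{2}^{2})\|x^{k+1}-x^{k}\|_{2}^{2}\ge 0$ precisely because of the step-size restriction $0<\mu<1/\|A\|_{2}^{2}$; this is where the hypothesis on $\mu$ enters. The remaining part is the penalty coupling term $\lambda\mu\sum_{i}|x^{k+1}_{i}|\big[(|x^{k}_{i}|+\epsilon_{i})^{-(1-p)}-(|x^{k+1}_{i}|+\epsilon_{i})^{-(1-p)}\big]$, which arises because the surrogate freezes the weights at $x^{k}$. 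If this term together with the quadratic surplus can be shown to be nonnegative, then $\mu\,\mathcal{H}_{1}(x^{k+1})\le \mathcal{H}_{2}(x^{k+1},x^{k})\le \mu\,\mathcal{H}_{1}(x^{k})$, giving the descent $\mathcal{H}_{1}(x^{k+1})\le\mathcal{H}_{1}(x^{k})$, and in fact a quantitative gap $\mathcal{H}_{1}(x^{k})-\mathcal{H}_{1}(x^{k+1})\ge \frac{1}{\mu}(1-\mu\|A\|_{2}^{2})\|x^{k+1}-x^{k}\|_{2}^{2}$.

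The main obstacle is exactly this penalty coupling term. Writing $f(t)=t/(t+\epsilon_{i})^{1-p}$, the surrogate replaces $f$ by the chord through the origin and the point $(|x^{k}_{i}|,f(|x^{k}_{i}|))$; since $f$ is concave on $[0,\infty)$ with $f(0)=0$, this chord lies above $f$ when $|x^{k+1}_{i}|\ge|x^{k}_{i}|$ but below it when a coordinate shrinks, so the coupling term is \emph{not} sign-definite term by term and the naive majorization fails. I would handle this through the concavity inequality $f(|x^{k+1}_i|)\le f(|x^{k}_i|)+f'(|x^{k}_i|)(|x^{k+1}_i|-|x^{k}_i|)$ together with $f'(t)\le f(t)/t$ (the tangent slope never exceeds the chord slope for a concave function vanishing at $0$), aiming to absorb the shrinking coordinates into the quadratic surplus $(1-\mu\|A\|_{2}^{2})\|x^{k+1}-x^{k}\|_{2}^{2}$. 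Making this absorption rigorous, rather than merely bounding term by term, is the crux of the whole argument.

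Once monotonicity with a summable gap is in hand, the tail is routine. The sequence $\mathcal{H}_{1}(x^{k})$ is nonincreasing and bounded below by $0$, hence convergent; summing the quantitative gap over $k$ forces $\sum_{k}\|x^{k+1}-x^{k}\|_{2}^{2}<\infty$ and therefore $\|x^{k+1}-x^{k}\|_{2}\to0$. Coercivity of $\mathcal{H}_{1}$ (its penalty grows like $\sum_i|x_i|^{p}$ as $\|x\|_{2}\to\infty$) confines $\{x^{k}\}$ to a bounded level set, so a subsequence $x^{k_{j}}\to x^{\ast}$ exists; passing to the limit in the thresholding map (\ref{equ19}), using $\|x^{k+1}-x^{k}\|_{2}\to0$ together with continuity of $S_{\lambda,1}$, of $B_{\mu}$, and of the weights (continuous since each $\epsilon_{i}>0$ removes the singularity at the origin), shows that $x^{\ast}$ is a fixed point of the iteration, and continuity of $\mathcal{H}_{1}$ then yields $\mathcal{H}_{1}(x^{k})\to\mathcal{H}_{1}(x^{\ast})$ for the full sequence by monotonicity.
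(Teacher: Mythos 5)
Your reduction of the descent claim to the inequality $\mu\,\mathcal{H}_{1}(x^{k+1})\le\mathcal{H}_{2}(x^{k+1},x^{k})$ is the right skeleton, and you have correctly located the one step that is genuinely problematic: the surrogate penalty $|x_{i}|(|x^{k}_{i}|+\epsilon_{i})^{p-1}$ is the chord through the origin of the concave function $f(t)=t(t+\epsilon_{i})^{p-1}$, so it under-estimates $f$ on coordinates that shrink, and the coupling term is not sign-definite. The gap is that your proposed repair does not close. The excess of the true penalty over the surrogate on a shrinking coordinate is, to leading order, $\bigl(f(s)/s-f'(s)\bigr)(s-t)=(1-p)\,s\,(s+\epsilon_{i})^{p-2}(s-t)$ with $s=|x^{k}_{i}|$ and $t=|x^{k+1}_{i}|$: it is \emph{first} order in $s-t$, with a coefficient bounded away from zero whenever $|x^{k}_{i}|$ is. The quadratic surplus $(1-\mu\|A\|_{2}^{2})\|x^{k+1}-x^{k}\|_{2}^{2}$ into which you want to absorb it is \emph{second} order, so for small steps that shrink coordinates the linear excess dominates and the inequality $\mu\,\mathcal{H}_{1}(x^{k+1})\le\mathcal{H}_{2}(x^{k+1},x^{k})$ simply fails; concavity estimates of the type $f'(t)\le f(t)/t$ cannot rescue it. A correct descent proof must either replace the surrogate by a genuine majorizer (e.g.\ the tangent $f(s)+f'(s)(t-s)$, as in the classical reweighted-$\ell_{1}$ analyses) or establish descent by a different mechanism. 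Everything after this point in your write-up (summability of $\|x^{k+1}-x^{k}\|_{2}^{2}$, boundedness via coercivity, passage to the limit) is fine but rests on the unproved step.

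For what it is worth, the paper's own proof does not overcome this difficulty either: it passes from $\mathcal{H}_{2}(x^{k+1},x^{k})$ to $\mathcal{H}_{1}(x^{k+1})$ through the displayed identity $\mathcal{H}_{1}(x^{k+1})=\frac{1}{\mu}\bigl[\mathcal{H}_{2}(x^{k+1},x^{k})-\|x^{k+1}-x^{k}\|_{2}^{2}\bigr]+\|Ax^{k+1}-Ax^{k}\|_{2}^{2}$, which is false: expanding (\ref{equ13}) leaves on the right-hand side the frozen penalty $\lambda\sum_{i}|x^{k+1}_{i}|(|x^{k}_{i}|+\epsilon_{i})^{p-1}$, not the true penalty $\lambda\sum_{i}|x^{k+1}_{i}|(|x^{k+1}_{i}|+\epsilon_{i})^{p-1}$. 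The paper thus silently assumes exactly the coupling inequality you flagged as the crux. You have diagnosed the real difficulty more honestly than the source, but neither argument, as written, proves the theorem.
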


\begin{proof}
According to the proof of Theorem \ref{the1}, we have
$$\mathcal{H}_{2}(x^{k+1},x^{k})=\min_{x\in \mathbb{R}^{n}} \mathcal{H}_{2}(x, x^{k}).$$
Combined with the definition of $\mathcal{H}_{1}(x)$ and $\mathcal{H}_{2}(x,y)$, we have
\begin{eqnarray*}
\mathcal{H}_{1}(x^{k+1})&=&\frac{1}{\mu}[\mathcal{H}_{2}(x^{k+1}, x^{k})-\|x^{k+1}-x^{k}\|_2^{2}]+\|Ax^{k+1}-Ax^{k}\|_2^2.
\end{eqnarray*}
Since $0<\mu<\frac{1}{\|A\|_{2}^{2}}$, we get
\begin{equation}\label{equ23}
\begin{array}{llll}
\mathcal{H}_{1}(x^{k+1})&=&\displaystyle\frac{1}{\mu}[\mathcal{H}_{2}(x^{k+1},x^{k})-\|x^{k+1}-x^{k}\|_2^2]+\|Ax^{k+1}-Ax^{k}\|_2^2\\
&\leq&\displaystyle\frac{1}{\mu}[\mathcal{H}_{2}(x^{k},x^{k})-\|x^{k+1}-x^{k}\|_2^2]+\|Ax^{k+1}-Ax^{k}\|_2^2\\
&=&\displaystyle \mathcal{H}_{1}(x^{k})-\frac{1}{\mu}\|x^{k+1}-x^{k}\|_2^2+\|Ax^{k+1}-Ax^{k}\|_2^2\\
&\leq&\mathcal{H}_{1}(x^{k}).
\end{array}
\end{equation}
That is, the sequence $\{x^{k}\}$ is a minimization sequence of function $\mathcal{H}_{1}(x)$, i.e., $\mathcal{H}_{1}(x^{k+1})\leq \mathcal{H}_{1}(x^{k})\ \ \mathrm{for}\ \mathrm{all}\ \ k\geq0$,
and the sequence $\{\mathcal{H}_{1}(x^{k})\}$ monotonically decreases to a fixed value $\mathcal{H}_{1}^{\ast}$. Due to the bound of $x^{k}\in \{x\in \mathbb{R}^{n}: \mathcal{H}_{1}(x)\leq \mathcal{H}_{1}(x^{0})\}$, the sequence $\{x^{k}\}$ is bounded. So, the sequence $\{x^{k}\}$ exists a limit point $x^{\ast}$. According to the continuity of $\mathcal{H}_{1}(x)$ and the
monotonicity of $\mathcal{H}_{1}(x^{k})$, it then follows that $\mathcal{H}_{1}^{\ast}=\mathcal{H}_{1}(X^{\ast})$. This completes the proof.
\end{proof}

\begin{theorem}\label{the5}
The sequence $\{x^{k}\}$ generated by iteration (\ref{equ19}) with the step size $\mu$ satisfying $0<\mu<\frac{1}{\|A\|_{2}^{2}}$ is asymptotically regular, i.e., $\lim_{k\rightarrow\infty}\|x^{k+1}-x^{k}\|_{2}^{2}=0$.
\end{theorem}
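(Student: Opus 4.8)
The plan is to reuse the energy identity already established in the proof of Theorem \ref{the4} and extract from it a summable control on the increments. Recall from (\ref{equ23}) that the iteration satisfies
$$\mathcal{H}_{1}(x^{k+1})=\mathcal{H}_{1}(x^{k})-\frac{1}{\mu}\|x^{k+1}-x^{k}\|_{2}^{2}+\|Ax^{k+1}-Ax^{k}\|_{2}^{2}.$$
First I would bound the last term by the spectral norm of $A$, namely $\|Ax^{k+1}-Ax^{k}\|_{2}^{2}\leq\|A\|_{2}^{2}\|x^{k+1}-x^{k}\|_{2}^{2}$, and collect the two increment terms. This rearranges the identity into the one-sided estimate
$$\mathcal{H}_{1}(x^{k})-\mathcal{H}_{1}(x^{k+1})\geq\Big(\frac{1}{\mu}-\|A\|_{2}^{2}\Big)\|x^{k+1}-x^{k}\|_{2}^{2}.$$

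The crucial observation is that the step-size restriction $0<\mu<1/\|A\|_{2}^{2}$ makes the constant $c:=1/\mu-\|A\|_{2}^{2}$ strictly positive, so the right-hand side is a genuinely positive multiple of the squared increment. Next I would sum this inequality over $k=0,1,\dots,N$; the left-hand side telescopes to $\mathcal{H}_{1}(x^{0})-\mathcal{H}_{1}(x^{N+1})$, yielding
$$c\sum_{k=0}^{N}\|x^{k+1}-x^{k}\|_{2}^{2}\leq\mathcal{H}_{1}(x^{0})-\mathcal{H}_{1}(x^{N+1}).$$
Since $\mathcal{H}_{1}(x)=\|Ax-b\|_{2}^{2}+\lambda\sum_{i=1}^{n}|x_{i}|/(|x_{i}|+\epsilon_{i})^{1-p}\geq0$ is a sum of nonnegative terms, it is bounded below by $0$, so $\mathcal{H}_{1}(x^{N+1})\geq0$ and the partial sums are dominated by the fixed quantity $\mathcal{H}_{1}(x^{0})/c$, uniformly in $N$.

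Letting $N\to\infty$ then shows that the series $\sum_{k=0}^{\infty}\|x^{k+1}-x^{k}\|_{2}^{2}$ converges, and a convergent series necessarily has vanishing general term, which is exactly the claimed asymptotic regularity $\lim_{k\to\infty}\|x^{k+1}-x^{k}\|_{2}^{2}=0$. I do not expect a real obstacle here: the entire argument is driven by the strict positivity of $c$, which is precisely what the hypothesis $\mu<1/\|A\|_{2}^{2}$ guarantees, together with the elementary fact that $\mathcal{H}_{1}$ is bounded below. The only point deserving a moment's care is the spectral-norm estimate $\|Ax^{k+1}-Ax^{k}\|_{2}\leq\|A\|_{2}\|x^{k+1}-x^{k}\|_{2}$, which is merely the definition of the operator norm applied to the increment $x^{k+1}-x^{k}$, and the verification that summability of the increments forces them to zero.
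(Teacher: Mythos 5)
Your argument is correct and is essentially the paper's own proof: both combine the descent estimate (\ref{equ23}) with the operator-norm bound $\|A(x^{k+1}-x^{k})\|_{2}\leq\|A\|_{2}\|x^{k+1}-x^{k}\|_{2}$, telescope the resulting inequality, and use $\mathcal{H}_{1}\geq 0$ to conclude that $\sum_{k}\|x^{k+1}-x^{k}\|_{2}^{2}$ converges (your constant $c=\tfrac{1}{\mu}-\|A\|_{2}^{2}$ is exactly the paper's $\theta/\mu$). The only nitpick is that (\ref{equ23}) gives an inequality $\mathcal{H}_{1}(x^{k+1})\leq\mathcal{H}_{1}(x^{k})-\tfrac{1}{\mu}\|x^{k+1}-x^{k}\|_{2}^{2}+\|Ax^{k+1}-Ax^{k}\|_{2}^{2}$ rather than the equality you quote, but since you only use the direction that actually holds, nothing in your derivation is affected.
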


\begin{proof}
Let $\theta=1-\mu\|A\|_{2}^{2}$, we can get that $\theta\in(0, 1)$ and
\begin{equation}\label{equ24}
\mu\|A(x^{k+1}-x^{k})\|_2^2\leq(1-\theta)\|x^{k+1}-x^{k}\|_2^2.
\end{equation}
By (\ref{equ23}), we have
\begin{equation}\label{equ25}
\displaystyle \frac{1}{\mu}\|x^{k+1}-x^{k}\|_2^2-\|Ax^{k+1}-Ax^{k}\|_2^2\leq \mathcal{H}_{1}(x^{k})-\mathcal{H}_{1}(x^{k+1}).
\end{equation}
Combing (\ref{equ24}) and (\ref{equ25}), we get
\begin{eqnarray*}
\sum_{k=0}^{N}\{\|x^{k+1}-x^{k}\|_2^2\}&\leq&\frac{1}{\theta}\sum_{k=0}^{N}\{\|x^{k+1}-x^{k}\|_2^2\}-\frac{1}{\theta}\sum_{k=0}^{N}\{\mu\|Ax^{k+1}-Ax^{k}\|_2^2\}\\
&\leq&\frac{\mu}{\theta}\sum_{k=0}^{N}\{\mathcal{H}_{1}(x^{k})-\mathcal{H}_{1}(x^{k+1})\}\\
&=&\frac{\mu}{\theta}(\mathcal{H}_{1}(x^{0})-\mathcal{H}_{1}(x^{N+1}))\\
&\leq&\frac{\mu}{\theta}\mathcal{H}_{1}(x^{0}).
\end{eqnarray*}
Thus, the series $\sum_{k=0}^{\infty}\|x^{k+1}-x^{k}\|_2^2$ is convergent, which means that
$$\|x^{k+1}-x^{k}\|_2^2\rightarrow 0 \ \ as \ \ k\rightarrow\infty.$$
This completes the proof.
\end{proof}

\begin{theorem} \label{the5}
Let $\{x^{k}\}$ be a sequence generated by iteration (\ref{equ19}) with the step size $\mu$ satisfying $0<\mu<\frac{1}{\|A\|_{2}^{2}}$. Then any accumulation point of $\{x^{k}\}$
is a stationary point of the problem $(P_{p}^{\lambda,\epsilon})$.
\end{theorem}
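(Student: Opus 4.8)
The plan is to combine the boundedness of $\{x^{k}\}$ from Theorem \ref{the4} with the asymptotic regularity established in the preceding theorem, and then to pass to the limit in the iteration (\ref{equ19}) along a convergent subsequence, exploiting the continuity of the one-step map. First I would fix an accumulation point $x^{\ast}$ of $\{x^{k}\}$ and extract a subsequence $\{x^{k_{j}}\}$ with $x^{k_{j}}\to x^{\ast}$; such a subsequence exists because Theorem \ref{the4} confines $\{x^{k}\}$ to the bounded sublevel set $\{x:\mathcal{H}_{1}(x)\le\mathcal{H}_{1}(x^{0})\}$. Invoking the asymptotic regularity $\lim_{k\to\infty}\|x^{k+1}-x^{k}\|_{2}^{2}=0$ then forces the shifted subsequence to obey $x^{k_{j}+1}\to x^{\ast}$ as well.

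Next I would package the recursion as a coordinatewise one-step map $g:\mathbb{R}^{n}\to\mathbb{R}^{n}$ defined by
$$[g(x)]_{i}=\mathrm{sign}([B_{\mu}(x)]_{i})\cdot\max\Big\{|[B_{\mu}(x)]_{i}|-\frac{\lambda\mu}{2(|x_{i}|+\epsilon_{i})^{1-p}},0\Big\},$$
so that (\ref{equ19}) reads $x^{k+1}=g(x^{k})$. The essential observation is that $g$ is continuous on all of $\mathbb{R}^{n}$: the residual map $B_{\mu}(x)=x+\mu A^{\top}(b-Ax)$ is affine; the soft-thresholding operator $S_{\tau,1}$ of Lemma \ref{lem1} is jointly continuous in its threshold $\tau$ and its argument; and since $\epsilon\succ0$ the weight $(|x_{i}|+\epsilon_{i})^{1-p}$ is bounded below by $\epsilon_{i}^{1-p}>0$, so the threshold $\lambda\mu/(|x_{i}|+\epsilon_{i})^{1-p}$ depends continuously on $x$ with no singularity at $x_{i}=0$. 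This is precisely where the modified penalty pays off: unlike the hard- or half-thresholding maps, the reweighted soft-thresholding map carries no jump discontinuities, so the limit may be taken directly.

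With continuity in hand I would let $j\to\infty$ in the identity $x^{k_{j}+1}=g(x^{k_{j}})$. The left-hand side tends to $x^{\ast}$ by asymptotic regularity, while the right-hand side tends to $g(x^{\ast})$ by continuity, whence $x^{\ast}=g(x^{\ast})$, i.e.
$$x^{\ast}_{i}=\mathrm{sign}([B_{\mu}(x^{\ast})]_{i})\cdot\max\Big\{|[B_{\mu}(x^{\ast})]_{i}|-\frac{\lambda\mu}{2(|x^{\ast}_{i}|+\epsilon_{i})^{1-p}},0\Big\},\qquad i=1,\dots,n.$$
This is exactly the fixed-point relation (\ref{equ16}) of Theorem \ref{the3}, which, together with Corollary \ref{cor1}, encodes the first-order optimality (stationarity) condition of $(P_{p}^{\lambda,\epsilon})$; hence the arbitrary accumulation point $x^{\ast}$ is a stationary point, as claimed.

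I expect the main obstacle to be the justification of the limit interchange in this last step, which rests entirely on the continuity of $g$. The delicate ingredients are the continuity of the threshold coefficient $\lambda\mu/(|x_{i}|+\epsilon_{i})^{1-p}$ and the nonsmooth clipping $\max\{\cdot,0\}$; both are controlled by the strict positivity $\epsilon_{i}>0$ (which removes the singularity at the origin) and by the continuity—though nondifferentiability—of soft-thresholding. A secondary point to make precise is that \emph{stationary point} must here be understood as a solution of the fixed-point equation (\ref{equ16}); once that identification is adopted, the argument closes without additional computation.
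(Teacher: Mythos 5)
Your proposal is correct, and it reaches the same fixed-point identity $x^{\ast}_{i}=S_{\frac{\lambda\mu}{(|x_{i}^{\ast}|+\epsilon_{i})^{1-p}},1}([B_{\mu}(x^{\ast})]_{i})$ as the paper, but by a genuinely different mechanism in the key limiting step. The paper does not argue via continuity of the update map: after establishing $x^{k_{j}}\to x^{\ast}$ and $x^{k_{j}+1}\to x^{\ast}$ exactly as you do, it invokes the \emph{variational} characterization of the iterate, namely that $x^{k_{j}+1}$ globally minimizes the surrogate $\|x-B_{\mu}(x^{k_{j}})\|_{2}^{2}+\lambda\mu\sum_{i}|x_{i}|/(|x_{i}^{k_{j}}|+\epsilon_{i})^{1-p}$, writes the resulting inequality against an arbitrary fixed $x$, and passes to the limit in that inequality (the denominators converge because $\epsilon_{i}>0$) to conclude that $x^{\ast}$ minimizes the limiting surrogate, whence the thresholding formula follows from Theorem \ref{the3}. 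You instead exploit the closed form of the update, package it as $x^{k+1}=g(x^{k})$, and prove $g$ is continuous --- your observation that continuity hinges on $\epsilon_{i}>0$ keeping the threshold $\lambda\mu/(|x_{i}|+\epsilon_{i})^{1-p}$ nonsingular, and on soft-thresholding being jointly continuous in threshold and argument (unlike hard or half thresholding), is accurate and is arguably the cleaner route. What the paper's route buys is a slightly stronger intermediate conclusion (global minimality of $x^{\ast}$ for the limiting surrogate, not merely the fixed-point relation), though for this surrogate --- a quadratic plus a weighted $\ell_{1}$ term, convex in $x$ for fixed weights --- the two are equivalent; what your route buys is that it bypasses Theorem \ref{the2} and the per-$x$ limit interchange entirely. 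One shared caveat, not a gap in your argument relative to the paper's: both proofs treat $\lambda$ as fixed across iterations, whereas Algorithm 1 updates $\lambda_{k}$ at every step, so strictly speaking the continuity (or the variational inequality) is being applied to the constant-$\lambda$ iteration. Your closing remark that ``stationary point'' must be read as ``solution of the fixed-point equation (\ref{equ16})'' matches exactly what the paper proves.
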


\begin{proof}
Let $\{x^{k_{j}}\}$ be a convergent subsequence of sequence $\{x^{k}\}$, and denote $x^{\ast}$ as the limit point of subsequence $\{x^{k_{j}}\}$, i.e.,
\begin{equation}\label{equ26}
x^{k_{j}}\rightarrow x^{\ast}\ \ \mathrm{as}\ \ j\rightarrow \infty.
\end{equation}
By following triangle inequality
$$\|x^{k_{j}+1}-x^{\ast}\|_{2}\leq\|x^{k_{j}+1}-x^{k_{j}}\|_{2}+\|x^{k_{j}}-x^{\ast}\|_{2}\rightarrow 0, \ \ k_{j}\rightarrow \infty,$$
we derive
\begin{equation}\label{equ27}
x^{k_{j}+1}\rightarrow x^{\ast}\ \ \mathrm{as}\ \ k_{j}\rightarrow \infty.
\end{equation}
Combing iteration (\ref{equ19}) and Theorem \ref{the2}, we can get that
\begin{eqnarray*}
\|x^{k_{j}+1}-B_{\mu}(x^{k_{j}})\|_{2}^{2}+\lambda\mu \sum_{i=1}^{N}\frac{|x_{i}^{k_{j}+1}|}{(|x_{i}^{k_{j}}|+\epsilon_{i})^{1-p}}\leq\|x-B_{\mu}(x^{k_{j}})\|_{2}^{2}+\lambda\mu \sum_{i=1}^{N}\frac{|x_{i}|}{(|x_{i}^{k_{j}}|+\epsilon_{i})^{1-p}}.
\end{eqnarray*}
Taking limit of the sequence $\{x^{k_{j}+1}\}$ and $\{x^{k_{j}}\}$ in above inequality, we can immediately get that
\begin{eqnarray*}
\|x^{\ast}-B_{\mu}(x^{\ast})\|_{2}^{2}+\lambda\mu \sum_{i=1}^{N}\frac{|x_{i}^{\ast}|}{(|x_{i}^{\ast}|+\epsilon_{i})^{1-p}}\leq\|x-B_{\mu}(x^{\ast})\|_{2}^{2}+\lambda\mu \sum_{i=1}^{N}\frac{|x_{i}|}{(|x_{i}^{\ast}|+\epsilon_{i})^{1-p}},
\end{eqnarray*}
which means that $x^{\ast}$ minimizes the following function
\begin{equation}\label{equ28}
\|x-B_{\mu}(x^{\ast})\|_{2}^{2}+\lambda\mu \sum_{i=1}^{n}\frac{|x_{i}|}{(|x_{i}^{\ast}|+\epsilon_{i})^{1-p}},
\end{equation}
and we can conclude that
$$x^{\ast}_{i}=S_{\frac{\lambda\mu}{(|x_{i}^{\ast}|+\epsilon_{i})^{1-p}},1}([B_{\mu}(x^{\ast})]_{i})$$
for all $i=1,2,\cdots,n$. This completes the proof.
\end{proof}

\section{Numerical experiments}\label{section4}
In this section, we carry out a series of simulations to demonstrate the performance of IT algorithm. The iterative soft thresholding algorithm (Soft algorithm)\cite{dau27},
iterative half thresholding algorithm (Half algorithm)\cite{xu32} and our IT algorithm are compared in these experiments. The experiments are all performed on a Lenovo-PC with
an Intel(R) Core(TM) i7-6700 CPU @ 3.40GHZ with 16GB of RAM running Microsoft Windows 7.

To show the success rate of these three algorithms in recovering a signal with the different cardinality for a given measurement matrix $A\in \mathbb{R}^{256\times 1024}$
with entries independently drawn by random from a Gaussian distribution, $\mathcal{N}(0,1)$. By randomly generating sparse vectors $x_{0}$, we generate vectors $b$, and we
know the sparsest solution to the linear system $Ax_{0}=b$. The stopping criterion is defined as
$$\frac{\|x^{k+1}-x^{k}\|_{2}}{\|x^{k}\|_{2}}\leq \mathrm{Tol}$$
where $x^{k+1}$ and $x^{k}$ are numerical results from two continuous iterative steps and $\mathrm{Tol}$ is a given small number. The success is measured by computing
the relative error (RE):
$$\mathrm{RE}=\frac{\|x^{\ast}-x_{0}\|_{2}}{\|x_{0}\|_{2}}$$
to indicate a perfect recovery of the original sparse vector $x_{0}$. In our experiments, we set to $\mathrm{Tol}=10^{-8}$. Moreover, we also find
that the the quantity of the solution of the IT algorithm also depends seriously on the setting of the parameter $\epsilon$, and in our experiments, we set
$$\epsilon_{i}=\max\{0.7|[\mu A^{\top}(b-Ax^{k})]_{i}|, 10^{-3}\}.$$
All of our experiments, we repeatedly perform 20 tests and present average results.

\begin{figure}[h!]
 \centering
 \includegraphics[width=0.65\textwidth]{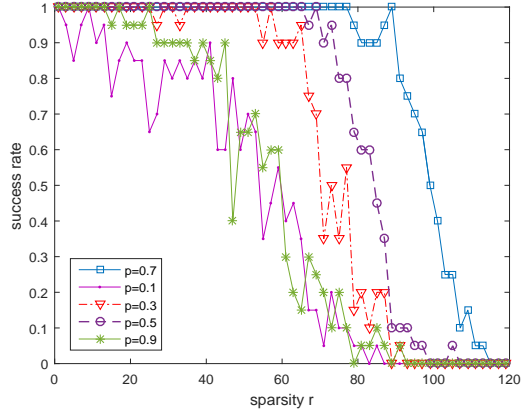}
\caption{The success rate of the IT algorithm in the recovery of a sparse signal with different sparsity for some different $p$.}
\label{fig:1}       
\end{figure}

The graphs presented in Figure \ref{fig:1} show the performances of the IT algorithm in recovering the true (sparsest) signals with different $p$. Comparing these performances
we can find that $p=0.7$ is the best strategy. The graphs presented in Figure \ref{fig:2} show the success rate of Soft algorithm, Half algorithm and IT algorithm in recovering
the true (sparsest) signals. We can see that IT algorithm can exactly recover the ideal signal until $r$ is around $78$, Half algorithm's counterpart is around $70$ and Soft
algorithm's counterpart is around $38$. As we can see, the IT algorithm has the best performance, with Half algorithm as the second.

\begin{figure}[h!]
 \centering
 \includegraphics[width=0.65\textwidth]{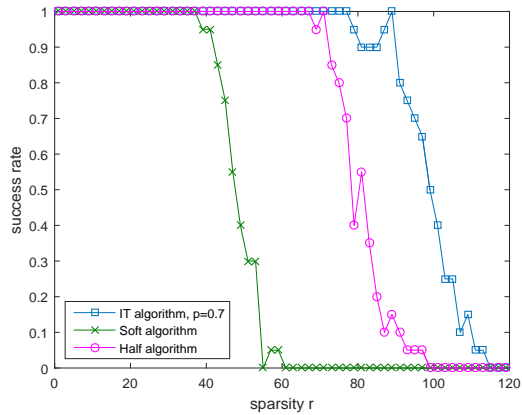}
\caption{The comparison of three algorithms in the recovery of a sparse signal with different sparsity.}
\label{fig:2}       
\end{figure}

\section{Conclusion}\label{section5}
In this paper, we replace the $\ell_{p}$-norm $\|x\|_{p}^{p}$ with an approximate function $\sum_{i=1}^{n}\frac{|x_{i}|}{(|x_{i}|+\epsilon_{i})^{1-p}}$. With change the parameter
$\epsilon>0$, this approximate function would like to interpolate the $\l_{p}$-norm $\|x\|_{p}^{p}$. By this transformation, we translated the $\l_{p}$-norm regularization
minimization problem $(P_{p}^{\lambda})$ into a variant $\l_{p}$-norm regularization minimization $(P_{p}^{\lambda,\epsilon})$. We develop the thresholding representation theory of
the problem $(P_{p}^{\lambda,\epsilon})$. Based on it, the IT algorithm is proposed to solve the problem $(P_{p}^{\lambda,\epsilon})$. Numerical results show that our algorithm
performs the best in sparse signal recovery problems compared with some state-of-art methods.

\begin{acknowledgements}
The authors would like to thank the reviewers and editors for their useful comments which significantly improve this paper. This research was supported by the
National Natural Science Foundation of China under the grants 11771347, 91730306, 41390454.
\end{acknowledgements}

\end{document}